\documentclass[12pt,a4paper]{article} 

\usepackage{amsthm}
\usepackage{amsfonts}
\usepackage{amsmath}
\usepackage{amssymb}
\usepackage{hyperref}
\usepackage[utf8]{inputenc}
\usepackage{breqn}
\usepackage{cleveref}
\crefname{equation}{formula}{formulas}
\usepackage{bm}
\usepackage[affil-it]{authblk}
\usepackage{etoolbox}
\patchcmd{\thebibliography}{\section*}{\section}{}{}
\usepackage{nicefrac}

\newtheorem{thm}{Theorem}[section]
\newtheorem{lma}[thm]{Lemma}

\newtheoremstyle{break}
  {\topsep}
  {\topsep}
  {\itshape}
  {}
  {\bfseries}
  {.}
  {\newline}
  {}
\theoremstyle{break}

\newtheorem*{lmaB*}{Lemma}

\theoremstyle{definition}
\newtheorem{dfn}[thm]{Definition}
\newtheorem{rmrk}[thm]{Remark}

\newtheorem{fact}[thm]{Fact}

\newtheoremstyle{breakDefinition}
  {\topsep}
  {\topsep}
  {}
  {}
  {\bfseries}
  {.}
  {\newline}
  {}
\theoremstyle{breakDefinition}

\newcommand{\proofAVName}{Claim}


\DeclareMathOperator{\cMe}{c}

\DeclareMathOperator{\theory}{Th}

\DeclareMathOperator{\age}{Age}

\DeclareMathOperator{\diag}{Diag}

\newcommand{\fraisse}{Fra{\"i}ss{\'e}\hspace{4pt}}

\newcommand*{\card}[1][M]{\vert #1\vert}
\newcommand{\subsetfinite}{\subseteq_{\omega}}
\mathchardef\mhyphen="2D

\newcommand*{\frmDblI}[5][\psi]{
	\ifthenelse{\equal{#4}{}}
		{#1_{#5}(\overline{#2},#3)}
		{#1_{#5}(\overline{#2},#3\overline{#4})}
}

\newcommand*{\frmIExTup}[5][\psi]{
	\ifthenelse{\equal{#4}{}}
		{\exists #3#1_{#5}(\overline{#2},#3)}
		{\exists #3\overline{#4}#1_{#5}(\overline{#2},#3\overline{#4})}
}

\newcommand{\forkindepPrv}[1]{
  \mathrel{
    \mathop{
      \vcenter{
        \hbox{\oalign{\noalign{\kern-.3ex}\hfil$\vert$\hfil\cr
              \noalign{\kern-.7ex}
              $\smile$\cr\noalign{\kern-.3ex}}}
      }
    }\displaylimits_{#1}
  }
}

\newcommand{\indepPrv}[2][\Gamma]{
  \mathrel{
    \mathop{
      \vcenter{
        \hbox{{\oalign{\noalign{\kern-.3ex}\hfil$\hspace*{4pt}\vert^{#1} $\hfil\cr
              \noalign{\kern-.7ex}
              $ \smile $\cr\noalign{\kern-.3ex}}}}
      }
    }\displaylimits_{#2}
  }
}

\DeclareMathOperator{\cl}{cl}

\newcommand*{\freejoin}[3]{#1\sqcup_{#2}#3}

\newcommand{\minpair}[2][A]{#1\not\leq^{*}_{\min}#2}

\newcommand{\kpluszero}{\mathcal{K}^{+}_{0}}
\newcommand{\kpluszerobar}{\overline{\mathcal{K}}^{+}_{0}}

\DeclareMathOperator*{\sep}{sep}
\DeclareMathOperator{\uo}{uo}
\DeclareMathOperator{\univ}{UNIV}

\setcounter{secnumdepth}{1}

\renewcommand{\proofAVName}{Calim}

\begin{document}

\title{Pseudofiniteness in Hrushovski Constructions}

\author{Ali Valizadeh
  \thanks{\texttt{valizadeh.ali@aut.ac.ir,}}
		}
\affil{	\small Amirkabir University of Technology, Iran}

\author{Massoud Pourmahdian
		\thanks{\texttt{pourmahd@ipm.ir}}
		}
\affil{ \small IPM(Institute for Research in Fundamental Sciences), Iran}

\date{}

\maketitle

\begin{abstract}
In a relational language consisting of a single relation $ R, $ we investigate pseudofiniteness of certain Hrushovski constructions obtained via predimension functions. It is notable that the arity of the relation $ R $ plays a crucial role in this context.  

When $ R $ is ternary, by extending the methods developed in \cite{Brody&Laskowski-OnRationalLimits}, we interpret $ \langle\mathbb{Q}^{+},<\rangle $ in the $ \langle\kpluszero,\leq^{*}\rangle $-generic and prove that this structure  is not pseudofinite. This provides a negative answer to the question posed in \cite{Evans&Wong-SomeRemarksonGen} (Question 2.6). This result, in fact, unfolds another aspect of complexity of this structure, along with undecidability and strict order property proved in \cite{Evans&Wong-SomeRemarksonGen} and \cite{Brody&Laskowski-OnRationalLimits}. On the other hand, when $ R $ is binary, it can be shown that the $ \langle\kpluszero,\leq^{*}\rangle $-generic is decidable and pseudofinite. 

\end{abstract}

\section{Introduction and Setup}\label{secIntro}
A complete $ \mathcal{L} $-theory $ T $ is called pseudofinite if for each $ \varphi\in T $ there exists a finite $ \mathcal{L} $-structure satisfying $ \varphi. $ A structure $ M $ is called pseudofinite if $ \theory(M) $ is pseudofinite. 
Since the introduction of the \fraisse-Hrushovski constructions, the question of whether the theories raising from these limits are pseudofinite has been a subject of interest.

Working in a relational language $ \mathcal{L} $ consisting of a single relation $ R $ of arity at least 2, it is known for an irrational $ \alpha\in(0,1), $ that the theory of the non-collapsed ab initio $ \langle\mathcal{K}_{\alpha},\leq_{\alpha}\rangle $-generic, $ \theory(M_{\alpha}), $ is the same as the almost sure theory of the class of random finite graphs with edge probability $ n^{-\alpha} $  (\cite{Baldwin&Shelah-Randomness}). In particular, this implies that the theory $ \theory(M_{\alpha}) $ is pseudofinite. Also, for a rational $ \alpha\in(0,1] $, it can be seen from the results in \cite{Brody&Laskowski-OnRationalLimits} that for an ascending sequence of irrationals $ \{\alpha_{i}\}_{i\in\omega} $ converging to $ \alpha, $ the $ \langle\mathcal{K}_{\alpha},\leq_{\alpha}\rangle $-generic $ M_{\alpha}, $ is elementary equivalent to an ultraproduct of $ M_{\alpha_{i}} $s. Hence, $ M_{\alpha} $ is again a pseudofinite structure.

On the other hand, in view of the fact that any strongly minimal pseudofinite structure is locally modular (\cite{Pillay-StroglyminimalPseudo}), Hrushovski's strongly minimal generic structure fails to be pseudofinite. 

Still, by considering an $ \alpha\in(0,1] $ and using the predimension function $ \delta_{\alpha}, $ one can equip the class of finite $ \mathcal{L} $-structures with another notion of \textit{closedness}, denoted by $ \leq^{*}_{\alpha}. $ This notion of closedness often leads to generic structures with unstable theories; one can refer to \cite{Pourmahd-SimpleGen, Pourmahd-SmoothClasses,Evans&Wong-SomeRemarksonGen} and \cite{Brody&Laskowski-OnRationalLimits} to investigate known results and further details.  

For a rational $ \alpha<1, $ using the methods developed in \cite{Evans&Wong-SomeRemarksonGen} and \cite{Brody&Laskowski-OnRationalLimits}, it can be shown that (a subtheory of) the theory of the $ \langle\mathcal{K}^{+}_{\alpha},\leq^{*}_{\alpha}\rangle $-generic interprets both finite graphs and Robinson arithmetic, hence has the strict order property and is undecidable. For $ \alpha=1, $ the same facts hold if $ R $ is a ternary relation. 
In this paper, by taking $ \alpha=1, $ we focus on the pseudofiniteness issue in the $ \langle\mathcal{K}^{+}_{\alpha},\leq^{*}_{\alpha}\rangle $-generic structure.

To be more precise, let $ \mathcal{L} $ be a relational language with only one relation $ R $ of arity $ n_{R}\in\{2,3\} $ and take $ \alpha=1. $ For any finite $ \mathcal{L} $-structure $ A $ in which $ R^{A} $ is symmetric and anti-reflexive let
\[ \delta(A):= |A| - |R[A]|, \]

where $ |R[A]| $ is the number of hyperedges, i.e.
\[ R[A]=\Big\{\{a_{1},\ldots,a_{n_{R}}\}\Big|(a_{1},\ldots,a_{n_{R}})\in R^{A}\Big\}. \]

Let $ \kpluszero $ be the following class of finite $ \mathcal{L} $-structures,
\[ \kpluszero:=\Big\{A \hspace*{3pt}\Big|\hspace*{5pt} |A|<\aleph_{0}, \forall B\subseteq A,\hspace*{3pt} \delta(B)>0 \Big\}. \]
As a convention, we assume that $ \kpluszero $ contains the empty set. We denote by $ \kpluszerobar $ the class of all $ \mathcal{L} $-structures $ M $ whose finite substructures lie in $ \kpluszero, $ namely $ \age(M)\subseteq\kpluszero. $

\begin{dfn}\label{dfnClosedness} Suppose that $ A,B\in\kpluszero. $
	\begin{itemize}
		\item[(i)] We say that $ A $ is \textit{closed} or \textit{strong} in $ B $ and in notations we write $ A\leq^{*}{B}, $ if  $ A\subseteq B $ and for any $ C\subseteq B $ with $ A\subsetneq C $ we have $ \delta(C)>\delta(A). $
		
		\item[(ii)] For $ M\in\kpluszerobar $ and a finite $ A\subseteq M $ we say that $ A $ is closed in $ M, $ denoted by $ A\leq^{*}M, $ if for any finite $ B\subseteq M $ with $ A\subseteq B $ we have that $ A\leq^{*}B. $
	\end{itemize}
\end{dfn}

In \Cref{secMain}, we further refine the techniques developed in \cite{Brody&Laskowski-OnRationalLimits} and show that the structure $ \langle\mathbb{Q}^{+},<\rangle $ is interpretable in the $ \langle\kpluszero,\leq^{*}\rangle $-generic structure $ \mathfrak{M} $ (\Cref{dfnGenericModel}). Hence, $ \theory(\mathfrak{M}) $ is not pseudofinite. This gives a negative answer to a question posed in \cite{Evans&Wong-SomeRemarksonGen} (Question 2.6). 
On the other hand, when $ R $ is a binary relation, it can be shown that $ \theory(\mathfrak{M}) $ is decidable and pseudofinite.

We recall some of the basic definitions in the context of \fraisse-Hrushovski constructions. The reader can refer to \cite{Wagner-Relational, Kueker&Laskowski-GenericStructures}, and \cite{Baldwin&Shi-StableGen} for more details.

\paragraph*{\textbf{Notation.}} $ \mathcal{L} $ is a relational language with only one ternary relation $ R. $ Finite $ \mathcal{L} $-structures are denoted by $ A,B,C,\ldots.  $ By $ M,N,\ldots $ we mean arbitrary $ \mathcal{L} $-structures. By $ A\subsetfinite M $ we mean that $ A $ is a finite substructure of $ M. $ Finally for $ A,B\subseteq C, $ the structure induced by $ C $ on $ A\cup B $ is denoted by $ AB. $

\begin{dfn}\label{dfnFreeJoin}
Suppose that $ N_{0}, N_{1}, N_{2}\in\kpluszerobar $ with $ N_{1}\cap N_{2} = N_{0}. $
The structure $ N $ is called the \textit{free join} or \textit{free amalgam of} $ N_{1} $ \textit{and} $ N_{2} $ \textit{over} $ N_{0} $, denoted by $ \freejoin{N_{1}}{N_{0}}{N_{2}}, $ if the universe of $ N $ is $ N_{1}\cup N_{2} $ and the following holds 
\[ R^{N} = R^{N_{1}}\cup R^{N_{2}}. \]
\end{dfn}
\begin{fact}\label{factFullAmalgam}
The class $ \langle\kpluszero,\leq^{*}\rangle $ has the \textit{full amalgamation property}, i.e., if $ N_{0}, N_{1}, N_{2}\in\kpluszerobar $ with $ N_{1}\cap N_{2} = N_{0}, N_{0}\leq^{*}N_{1} $ and $ N=\freejoin{N_{1}}{N_{0}}{N_{2}}, $ then $  N\in\kpluszerobar $ and $ N_{2}\leq^{*}N. $
\end{fact}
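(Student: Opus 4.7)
The plan is to reduce everything to the additivity identity
\[
\delta(C)=\delta(C_1)+\delta(C_2)-\delta(C_0),
\]
valid for any finite $C\subseteq N$ with $C_i:=C\cap N_i$ ($i=1,2$) and $C_0:=C_1\cap C_2=C\cap N_0$. This is immediate from the fact that in a free join no hyperedge of $R^N$ has vertices in both $N_1\setminus N_0$ and $N_2\setminus N_0$, so $|R[C]|=|R[C_1]|+|R[C_2]|-|R[C_0]|$, combined with inclusion--exclusion for the universes. For arbitrary (non-free) unions one obtains only the submodular inequality $\delta(X\cup Y)\leq\delta(X)+\delta(Y)-\delta(X\cap Y)$, which will also be used below.

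The key intermediate step is the following strengthening of $N_0\leq^{*}N_1$: for every finite $X\subseteq N_1$ one has $\delta(X\cap N_0)\leq\delta(X)$, with strict inequality whenever $X\not\subseteq N_0$. To see this, let $D_0\subsetfinite N_0$ be the intrinsic closure of $X\cap N_0$ inside $N_0$, so that $D_0\leq^{*}N_0$ and hence $D_0\leq^{*}N_1$ by transitivity of $\leq^{*}$. Since $D_0\subseteq N_0$ but $X\not\subseteq N_0$, we have $D_0\subsetneq D_0\cup X$ inside $N_1$, so $\delta(D_0)<\delta(D_0\cup X)$. Combining this strict increase with submodularity applied to $D_0$ and $X$, and using that $D_0\cap X=X\cap N_0$, yields $\delta(X\cap N_0)<\delta(X)$.

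With these two tools the conclusion follows quickly. For $N\in\kpluszerobar$: any finite $C\subseteq N$ satisfies $\delta(C)=\delta(C_1)+\delta(C_2)-\delta(C_0)\geq\delta(C_2)>0$, using the non-strict form of the lemma on $C_1\subseteq N_1$ and $N_2\in\kpluszerobar$. For $N_2\leq^{*}N$, take a finite $A\leq^{*}N_2$ and a proper finite extension $A\subsetneq B\subseteq N$; with $B_i=B\cap N_i$ and $B_0=B\cap N_0$, split on whether $A\subsetneq B_2$. If so, $\delta(B_2)>\delta(A)$ since $A\leq^{*}N_2$, whence $\delta(B)\geq\delta(B_2)>\delta(A)$. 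Otherwise $A=B_2$, which forces $B_0\subsetneq B_1$ (else $B\subseteq N_0\cup A\subseteq N_2=A$, contradicting $A\subsetneq B$), and the strict form of the lemma gives $\delta(B_1)>\delta(B_0)$, so $\delta(B)=\delta(B_1)+\delta(A)-\delta(B_0)>\delta(A)$.

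The principal difficulty is extracting the strict inequality in the lemma: plain submodularity yields only a non-strict estimate, but the strict version is precisely what handles the delicate sub-case $A=B_2$ in the closedness part. The intrinsic-closure trick---passing from $X\cap N_0$ to the (possibly larger) set $D_0\subseteq N_0$, which inherits $\leq^{*}$-closedness in $N_1$ by transitivity---is what converts the strict increase $\delta(D_0)<\delta(D_0\cup X)$ coming from $D_0\leq^{*}N_1$ into the desired strict inequality at the level of $X\cap N_0$ itself.
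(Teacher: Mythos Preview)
The paper states this as a Fact without proof; it is a standard result in the Hrushovski-construction literature. Your argument is the usual one, via the additivity identity for free amalgams together with submodularity, and is essentially correct.

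One genuine gap: your key lemma takes $D_0$ to be the closure of $X\cap N_0$ inside $N_0$ and asserts $D_0\subsetfinite N_0$. For infinite $N_0\in\kpluszerobar$ this closure need not be finite---the paper's own remark following \Cref{dfnClosure} says exactly this. In fact the paper never defines $N_0\leq^{*}N_1$ for infinite $N_0$; the natural extension of \Cref{dfnClosedness} to that setting is precisely the conclusion of your key lemma (namely $\delta(X\cap N_0)<\delta(X)$ whenever $X\subsetfinite N_1$ and $X\not\subseteq N_0$), so under that reading the lemma is the hypothesis itself and needs no proof. For finite $N_0,N_1,N_2$---the only case the paper actually invokes---closures are finite and your argument works as written.

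Two small slips. In verifying $N\in\kpluszerobar$ you conclude with $\delta(C_2)>0$, but $C_2$ may be empty (take $C\subseteq N_1\setminus N_0$); then $C_0=\varnothing$ as well and $\delta(C)=\delta(C_1)>0$ directly. And in the final case split the clause ``$B\subseteq N_0\cup A\subseteq N_2=A$'' is garbled: what you mean is $B\subseteq N_2$, hence $B=B_2=A$, contradicting $A\subsetneq B$.
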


Recall that, since $ \langle\kpluszero,\leq^{*}\rangle $ has the amalgamation property, there is a unique countable model $ \mathfrak{M}=\langle M,R^{M}\rangle\in\kpluszerobar $ with the following properties
\begin{itemize}
\item[(i)] For every $ A\in\kpluszero, $ there is a closed embedding of $ A $ into $ \mathfrak{M}. $ (\textit{Universality})
\item[(ii)] For every $ A\leq^{*}B\in\kpluszero $ with $ A\leq^{*}\mathfrak{M}, $ there is a closed embedding of $ B $ over $ A $ in $ \mathfrak{M}. $ (\textit{*-homogeneity})
\item[(iii)] $ \mathfrak{M} $ is the union of a chain of finite structures $ \{A_{i}:i\in\omega\}, $ where for each $ i\in\omega $ we have that $ A_{i}\in\kpluszero $ and $ A_{i}\leq^{*} A_{i+1}. $  
\end{itemize}
\begin{dfn}\label{dfnGenericModel}
The model $ \mathfrak{M}, $ described above, is called the $ \langle\kpluszero,\leq^{*}\rangle $\textit{-generic}, or simply the generic model when the context is clear.
\end{dfn}

\begin{dfn}\label{dfnClosure}
For any $ N\in\kpluszerobar $ and $ A\subsetfinite N, $ the closure of $ A $ in $ N, $ denoted by $ \cl^{*}_{N}(A), $ is the unique minimal substructure of $ N $ that contains $ A $ and is closed in $ N. $ 
\end{dfn}

\begin{dfn}\label{dfnMinimal} For $ A,B\in\kpluszero, $
\begin{itemize}
\item[(i)] $ (A,B) $ is called a \textit{minimal pair}, denoted by $ \minpair{B}, $ if  $ A\subseteq B $ and $ A $ is closed in any proper substructure of $ B $ containing $ A, $ but not in $ B. $ We also say that $ B $ is a \textit{minimal extension} of $ A. $ If $ \delta(B/A)=0, $ we call $ (A,B) $ a $ 0 $-minimal pair. 
\item[(ii)] A minimal pair $ (A,B) $ is a \textit{biminimal pair} if every element of $ A $ is participating in a relation with at least one component in $ B\backslash A. $
\end{itemize}
\end{dfn}
\begin{fact}\label{factMinimalPairsDelta}
Suppose that $ A,B\in\kpluszero. $ If $ \minpair[A]{B}, $ then $ \delta(B/A)\leq 0 $ and for any $ C $ with $ A\subsetneq C\subsetneq B $ we have that $ \delta(C/A)> 0. $
\end{fact}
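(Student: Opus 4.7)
The plan is to prove both claims by directly unpacking the definitions of closedness and of a minimal pair; no significant machinery is required beyond \Cref{dfnClosedness} and \Cref{dfnMinimal}.

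First I would handle the second (intermediate) claim. Fix any $C$ with $A \subsetneq C \subsetneq B$. Since $C$ is a proper substructure of $B$ containing $A$, the minimal-pair hypothesis $\minpair[A]{B}$ tells us that $A \leq^{*} C$. Applying the defining condition of $\leq^{*}$ from \Cref{dfnClosedness}(i) to the (full) subset $C$ of itself yields $\delta(C) > \delta(A)$; this is exactly $\delta(C/A) > 0$.

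Next I would establish $\delta(B/A) \leq 0$. Since by hypothesis $A$ is \emph{not} closed in $B$, the definition of closedness produces a witness $B'$ with $A \subsetneq B' \subseteq B$ and $\delta(B') \leq \delta(A)$. I claim $B' = B$. Suppose for contradiction $B' \subsetneq B$. Then $B'$ is a \emph{proper} substructure of $B$ properly containing $A$, so by the minimal-pair hypothesis $A \leq^{*} B'$, and then the same reasoning as above forces $\delta(B') > \delta(A)$, contradicting the choice of $B'$. Hence $B' = B$ and $\delta(B) \leq \delta(A)$, i.e. $\delta(B/A) \leq 0$.

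There is no real obstacle here, as the argument is entirely definition-chasing: the two parts are essentially dual readings of the two conditions packed into the notion of a minimal pair. The only point requiring a moment of care is in the first claim, where one must observe that a witness to the failure of $A \leq^{*} B$ cannot be located strictly between $A$ and $B$ without violating the minimality clause, and is therefore forced to be $B$ itself.
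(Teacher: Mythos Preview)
Your proof is correct and is the standard definition-unpacking argument. Note, however, that the paper does not actually supply a proof of this statement: it is recorded as a \emph{Fact} (i.e., a well-known property of minimal pairs in Hrushovski constructions) and left unproved, so there is no paper proof to compare against. Your write-up would serve perfectly well as the omitted justification.
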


\begin{rmrk}
It is a well-known fact that $ \cl^{*}_{N}(A) $ contains all finite towers of minimal extensions over $ A $ in $ N. $ Moreover, as a consequence of property (ii) of a generic model, while the closure of each finite substructure of $ \mathfrak{M} $ is finite, this is not the case in an arbitrary model of $ \theory(\mathfrak{M}). $
\end{rmrk}

\paragraph*{\textbf{Notation.}} Given a minimal pair $ (A,B) $ in $ N\in\kpluszerobar, $ by a \textit{copy} of $ B $ over $ A, $ we mean the image of an embedding of $ B $ over $ A $ into $ N. $ Also, by $ \chi_{N}(B/A), $ we denote the number of disjoint copies of $ B $ over $ A $ in $ N. $ Unlike an arbitrary model of $ \theory(\mathfrak{M}), $  the value of $ \chi_{\mathfrak{M}}(B/A) $ is always finite.

%

\section{Main Results}\label{secMain}
\subsection{Case of a Ternary Relation}
We assume that $ R $ is a ternary relation and will work inside the $ \langle\kpluszero,\leq^{*}\rangle $-generic structure $ \mathfrak{M}. $ Hence, all the closures are taken within $ \mathfrak{M}. $ To ease the notations, we drop the subscript $ \mathfrak{M} $ from $ \cl^{*}_{\mathfrak{M}}(A) $ and $ \chi_{\mathfrak{M}}(B/A), $ and will write $ \cl^{*}(A) $ and $ \chi(B/A) $ instead. We follow the terminology used in \cite{Brody&Laskowski-OnRationalLimits}. In particular, for a set $ S $ and a natural number $ k\geq1, $ we denote by $ [S]^{k} $ the set of all subsets $ Y\subseteq S $ with $ \card[Y]=k. $ 

Based on the following lemmas, given a natural number $ k\geq 1, $ there exists a definable relation $ R^{k}(x_{1}, \ldots,x_{k};v) $ such that for a fixed finite subset $ S\subsetfinite \mathfrak{M}, $ and every set $ X\subset[S]^{k}, $ there exists a ``\textit{code}'' $ v\in \mathfrak{M}, $ depending on $ S $ and $ X, $ that $ X=R^{k}(\mathfrak{M},v)\cap[S]^{k}. $

We recall Lemma 3.2 from \cite{Brody&Laskowski-OnRationalLimits} that is crucial for \Cref{lmaDefinability}. It worth noting that this lemma is obtained more easily in the present context. 
Once having the following lemma for ternary relations, \Cref{lmaDefinability} can be obtained using the same proof as given in Proposition 3.3 of \cite{Brody&Laskowski-OnRationalLimits}.
\begin{lma}\label{lmaMinPairExists}
For any natural number $ n\geq 1, $ there exists a natural number $ m $ such that for every $ A\in\kpluszero $ of size $ n $ there is a structure $ C\in\kpluszero $ with $ \card[C\backslash A]=m $ that is a $ 0 $-biminimal extension of $ A. $
\end{lma}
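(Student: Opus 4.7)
The plan is to give an explicit construction, uniform in $A$, that works for every $A\in\kpluszero$ of size $n$. Enumerate $A=\{a_{1},\ldots,a_{n}\}$. For $n\geq 2$, introduce $m=n$ fresh vertices $b_{1},\ldots,b_{n}$, let $C=A\cup\{b_{1},\ldots,b_{n}\}$, and add on top of $R[A]$ the ternary hyperedges $e_{j}:=\{a_{j},b_{j},b_{j+1}\}$, with indices taken modulo $n$. Each $e_{j}$ is a genuine $3$-element set whose intersection with $A$ is the singleton $\{a_{j}\}$, so the $e_{j}$ are pairwise distinct and disjoint from $R[A]$. Consequently $|R[C]\setminus R[A]|=n$ and $\delta(C/A)=n-n=0$, and the biminimality clause is immediate since $a_{j}$ lies in $e_{j}$, which has two components in $C\setminus A$.

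The two remaining verifications are that $(A,C)$ is a minimal pair and that $C\in\kpluszero$. For the minimality, take $A\subsetneq D\subsetneq C$ and write $D=A\cup\{b_{j}:j\in I\}$ with $\emptyset\neq I\subsetneq\{1,\ldots,n\}$. The new hyperedges in $D$ correspond bijectively to the consecutive index pairs $(j,j+1)$ (mod $n$) with $j,j+1\in I$, i.e., to the edges of the cycle graph on $\{1,\ldots,n\}$ induced on $I$. Since $I$ is a proper subset of the cycle vertex set, the induced subgraph is a disjoint union of paths and therefore has strictly fewer than $|I|$ edges. Hence $\delta(D/A)\geq 1$, whereas $\delta(C/A)=0$, giving both the minimality and the $0$-minimality.

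For $C\in\kpluszero$, take an arbitrary $D\subseteq C$ and split it as $D=A'\cup B'$ with $A'=D\cap A$ and $B'=D\cap\{b_{1},\ldots,b_{n}\}$. The new hyperedges contained in $D$ are those $e_{j}$ with $a_{j}\in A'$ and $b_{j},b_{j+1}\in B'$, and this set injects into $\{j:b_{j},b_{j+1}\in B'\}$, which has at most $|B'|$ elements. Since $\delta(A')>0$ follows from $A'\subseteq A\in\kpluszero$, we obtain $\delta(D)\geq\delta(A')+|B'|-|B'|>0$. The only degenerate case is $n=1$, where the cycle construction collapses; I handle it separately by taking $m=3$, fresh points $b_{1},b_{2},b_{3}$, and the three new hyperedges $\{a_{1},b_{1},b_{2}\}$, $\{a_{1},b_{1},b_{3}\}$, $\{a_{1},b_{2},b_{3}\}$, and verify the same three properties directly. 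Setting $m(n):=n$ for $n\geq 2$ and $m(1):=3$ yields the uniform bound demanded by the statement.

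The argument is elementary; the only subtle step is the strict inequality bounding the number of consecutive pairs inside $I$, which relies on the observation that every proper induced subgraph of an $n$-cycle is a forest. I do not anticipate a substantive obstacle: this matches the remark preceding the lemma that the statement is much easier here (ternary $R$ and $\alpha=1$) than in the more general setting treated in \cite{Brody&Laskowski-OnRationalLimits}.
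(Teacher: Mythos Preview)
Your construction is literally the same as the paper's (up to renaming $c_i$ to $b_i$): $m=3$ with the three hyperedges $\{a,b_i,b_j\}$ for $n=1$, and $m=n$ with the cycle of hyperedges $\{a_j,b_j,b_{j+1}\}$ for $n\ge 2$. The paper merely asserts that this $C$ is a $0$-biminimal extension of $A$, whereas you actually carry out the verifications; the only cosmetic slip is that your inequality $\delta(A')>0$ fails when $A'=\emptyset$, but then there are no new hyperedges and $\delta(D)=|B'|>0$ directly.
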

\begin{proof}
For $ n=1, $ take $ m=3. $ Now for a structure $ A=\{a\}, $ let $ C=\{a,c_{1},c_{2},c_{3}\} $ with $  R^{C} $ being the symmetric closure of the set $\{(a,c_{i},c_{j}) | \text{ for all } i\neq j\}. $

For $ n\geq2, $ let $ m=n. $ Now suppose $ A\in\kpluszero $ is a fixed structure with universe $ \{a_{1},\ldots,a_{n}\}. $ Let $ C=A\cup\{c_{1},\ldots,c_{n}\} $ and $  R^{C} $ be the symmetric closure of the following set 
\begin{align*}
 R^{A}\cup\Big\{(a_{i},c_{i},c_{i+1})\Big| \text{ for all } 1\leq i\leq n-1\Big\}\cup
 \Big\{(a_{n},c_{1},c_{n})\Big\}.
\end{align*}
One can observe that the introduced structure $ C $ is a $ 0 $-biminimal extension of $ A. $ 

Note that, the above construction depends only on the cardinality of $ A. $ Therefore, for any $ A'\in\kpluszero $ with $ \card[A']=n, $ one can find a $ 0 $-biminimal extension $ C'\in\kpluszero $ of $ A' $ with $ \card[C'\backslash A']=m. $
\end{proof}

\begin{lma}\label{lmaDefinability}
For any $ k\in\omega $ there is a definable relation $ R^{k}(x_{1},\ldots,x_{k};y) $, symmetric in the first $ k $ variables, such that for any $ S\subsetfinite \mathfrak{M}$ and any $ X\subseteq \left[S\right]^{k} $ there exists some $ v\in \mathfrak{M} $ such that for any $ a_{1},\ldots,a_{k}\in \mathfrak{M} $ we have
\[ \mathfrak{M}\models R^{k}(a_{1},\ldots,a_{k};v)\quad\quad\iff\quad\quad \{a_{1},\ldots,a_{k}\}\in X. \]\qed
\end{lma}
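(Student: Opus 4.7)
The approach mirrors Proposition 3.3 of \cite{Brody&Laskowski-OnRationalLimits}, made cleaner in the present ternary setting by the uniform biminimal-extension lemma \Cref{lmaMinPairExists}. The two main ingredients are (i) a single first-order formula $R^{k}$ built from a fixed $0$-biminimal extension, and (ii) an amalgamation argument inside $\kpluszero$ combined with $*$-homogeneity of $\mathfrak{M}$ that produces the code $v$.

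\textbf{Step 1 (the formula).} First I fix $k\in\omega$, take a template $A_{0}\in\kpluszero$ with $|A_{0}|=k+1$ and no relations, distinguish a vertex $a_{*}\in A_{0}$, and apply \Cref{lmaMinPairExists} to obtain $m$ and a $0$-biminimal extension $C_{0}\in\kpluszero$ of $A_{0}$ with $|C_{0}\setminus A_{0}|=m$. Letting $\psi(x_{1},\ldots,x_{k},y,z_{1},\ldots,z_{m})$ be the quantifier-free diagram of $C_{0}$, with $y$ in the role of $a_{*}$ and the $x_{i}$'s enumerating $A_{0}\setminus\{a_{*}\}$, I define
\[
R^{k}(x_{1},\ldots,x_{k};y)\ :=\ \bigvee_{\sigma\in\mathrm{Sym}(k)}\exists z_{1}\cdots z_{m}\,\psi(x_{\sigma(1)},\ldots,x_{\sigma(k)},y,z_{1},\ldots,z_{m}),
\]
which is symmetric in its first $k$ variables by construction.

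\textbf{Step 2 (building the code).} Given $S\subsetfinite\mathfrak{M}$ and $X\subseteq[S]^{k}$, set $S^{*}=\cl^{*}(S)$, which is finite. I form an $\mathcal{L}$-structure $B$ with universe $S^{*}\cup\{v\}\cup\bigsqcup_{Y\in X}D_{Y}$, where $v$ and the $m$-element sets $D_{Y}$ are fresh, by keeping the structure on $S^{*}$, joining $v$ freely to $S^{*}$, and installing, for each $Y\in X$, a copy of the pair $(A_{0},C_{0})$ on $Y\cup\{v\}\cup D_{Y}$ with $v$ taking the role of $a_{*}$. Iterated use of \Cref{factFullAmalgam}, together with $\delta(D_{Y}/Y\cup\{v\})=0$ and the fact that partial inclusions of each $D_{Y}$ strictly increase $\delta$ (\Cref{factMinimalPairsDelta}), will show that $B\in\kpluszero$ and $S^{*}\leq^{*}B$. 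Then, by $*$-homogeneity of $\mathfrak{M}$, I embed $B$ closedly over $S^{*}$ into $\mathfrak{M}$ and take the image of $v$ as the required code.

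\textbf{Step 3 (verification and main obstacle).} The positive side is immediate: if $Y=\{a_{1},\ldots,a_{k}\}\in X$, the built-in set $D_{Y}\subseteq B\subseteq\mathfrak{M}$ is a direct witness for $\mathfrak{M}\models R^{k}(a_{1},\ldots,a_{k};v)$. The delicate step, and the main obstacle, is ruling out \emph{spurious} copies for $\{a_{1},\ldots,a_{k}\}\notin X$. The argument is that any tuple $\bar{z}\subseteq\mathfrak{M}$ witnessing $R^{k}(\bar{a};v)$ realizes a $0$-biminimal extension of $\bar{a}\cup\{v\}$, hence $\bar{z}\subseteq\cl^{*}(\bar{a}\cup\{v\})\subseteq\cl^{*}(S^{*}\cup\{v\})$; since $B\leq^{*}\mathfrak{M}$ contains $S^{*}\cup\{v\}$, this forces $\bar{z}\subseteq B$. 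A direct inspection of $B$, using the disjointness of the $D_{Y}$'s and the fact that every edge of $B$ touching $v$ lies inside exactly one prescribed copy, will then preclude any such $\bar{z}$ when $\{\bar{a}\}\notin X$. The careful bookkeeping in Step 2---so that no unintended isomorphic image of $C_{0}$ over a set of the form $\bar{a}\cup\{v\}$ is accidentally created through overlaps between the $D_{Y}$'s and $S^{*}$---is what makes the last sentence true, and is the place where some care is required.
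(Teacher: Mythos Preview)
Your overall plan matches the paper exactly: the paper gives no self-contained argument and simply asserts that the proof of Proposition~3.3 in \cite{Brody&Laskowski-OnRationalLimits} goes through verbatim once \Cref{lmaMinPairExists} is available. So strategically you are on target, but two steps of your sketch would fail as written.

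In Step~1 you take $A_{0}$ relationless and let $\psi$ be the \emph{full} quantifier-free diagram of $C_{0}$. That diagram then contains the clauses $\neg R(x_{i},x_{j},x_{l})$ for all triples from $A_{0}$, so for any $Y\in X$ that happens to carry a hyperedge inside $S$, no tuple over $Y\cup\{v\}$ can satisfy $\psi$, and hence $\mathfrak{M}\models\neg R^{k}(Y;v)$ --- the wrong answer. The repair is to let $\psi$ record only those atomic and negated-atomic facts of $C_{0}$ that involve at least one variable $z_{i}$; this is precisely the data that forces $(\bar a v,\bar a v\bar z)$ to be a $0$-biminimal pair \emph{regardless} of the internal relations on $\bar a v$, which is what the uniformity clause of \Cref{lmaMinPairExists} actually delivers.

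In Step~3 your chain $\cl^{*}(\bar a\cup\{v\})\subseteq\cl^{*}(S^{*}\cup\{v\})\subseteq B$ silently assumes $\bar a\subseteq S^{*}$. The lemma, however, demands the biconditional for \emph{all} $a_{1},\dots,a_{k}\in\mathfrak{M}$, and when some $a_{i}\notin S^{*}$ (so certainly $\{a_{1},\dots,a_{k}\}\notin X$) you must still rule out a witness $\bar z$. Your closure argument no longer applies; this case needs a separate treatment, using $B\leq^{*}\mathfrak{M}$ together with the $0$-biminimality of $(\bar a v,\bar a v\bar z)$ and the fact that $v\in B$, to drive $\bar a\bar z$ into $B$ before the inspection-of-$B$ step can even be invoked. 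This, and not only the overlap bookkeeping you flag at the end, is where genuine care is required.
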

\paragraph*{\textbf{Notation.}} For a fixed $ v, $ we denote the formula $ R^{k}(\bar{x};v) $ by $ R^{k}_{v}(\bar{x}). $

\paragraph*{\textbf{Setup.}}
Fix three finite structures $ A,B,C\in\kpluszero $ with $ B $ and $ C $ being two non-isomorphic $ 0 $-minimal extensions of $ A. $ Let $ \mathcal{A} $ be the set of all tuples $ \bar{a}\in \mathfrak{M} $ satisfying the following conditions.
\begin{itemize}
\item[(i)] $ \bar{a}\cong A. $
\item[(ii)] There exists at least one copy of $  B $ and at least one copy of $ C $ over $ \bar{a}, $ namely $ \chi(B/\bar{a})\geq1 $ and $ \chi(C/\bar{a})\geq1. $
\item[(iii)] Any two distinct copies of $ B $ (respectively copies of $ C $) over $ \bar{a} $ are disjoint.
\item[(v)] No copy of $ B $ intersects a copy of $ C $ over $ \bar{a}. $
\end{itemize}

Note that, using full amalgamation (\Cref{factFullAmalgam}) and genericity of $ \mathfrak{M}, $ there are infinitely many copies of $ A $ in $ \mathfrak{M} $ satisfying the above conditions. Furthermore, for each $ p/q\in\mathbb{Q}^{+}, $ one can build a structure $ D $ that consists of $ A $ together with $ p $ many copies of $ B $ and $ q $ many copies of $ C, $ all being freely amalgamated over $ A. $ Using genericity of $ \mathfrak{M},$ there is a closed embedding of $ D $ into $ \mathfrak{M}. $

For a tuple $ \bar{a}\in\mathcal{A}, $ let $ \mathcal{B}_{\bar{a}} $ be the following set
\[ \mathcal{B}_{\bar{a}}:=\Big\{\bar{b}\in \mathfrak{M}\hspace*{5pt}\Big|\hspace*{5pt}\bar{b}\cap\bar{a}=\varnothing, \bar{a}\bar{b} \text{ is a copy of } B \text{ over } \bar{a}\Big\}. \]
Also, let $ \mathbf{B}_{\bar{a}} $ denote the union of $ \mathcal{B}_{\bar{a}}. $ We can define $ \mathcal{C}_{\bar{a}} $ and $ \mathbf{C}_{\bar{a}} $ in a similar way.

For a tuple $ \bar{a}\in\mathcal{A}, $ let a $ B $\textit{-basis} for $ \bar{a} $ be a subset of $ \mathbf{B}_{\bar{a}} $ that contains exactly one element from each copy of $ B $ over $ \bar{a}. $ Analogously, we can define a $ C $\textit{-basis} for $ \bar{a}. $

We can equip $ \mathcal{A} $ with an equivalence relation defined as $ \bar{a}\sim\bar{a}' $ if and only if 
\[ \chi(B/\bar{a}).\chi(C/\bar{a}')=\chi(B/\bar{a}').\chi(C/\bar{a}). \]

Since we are working in the generic model, the number of copies of $ B $ or $ C $ over each copy of $ A $ is always finite. Hence, it is easy to see that $ \sim $ defines an equivalence relation on $ \mathcal{A}. $

In fact, by genericity of $ \mathfrak{M}, $ each equivalence class $ [\bar{a}]_{\sim} $ corresponds to a unique non-negative rational number $ p/q $ where $ p=\chi(B/\bar{a}) $ and $ q=\chi(C/\bar{a}). $ Having this intuition in mind, one can naturally define an order $ \prec $ on $ \mathcal{A}/\sim $ in such a way that $ \langle\mathcal{A}/\sim, \prec\rangle\cong\langle\mathbb{Q}^{+},<\rangle. $ 

So, let $ [\bar{a}]_{\sim} \prec[\bar{a}']_{\sim} $ if and only if
\[ \chi(B/\bar{a}).\chi(C/\bar{a}')<\chi(B/\bar{a}').\chi(C/\bar{a}). \]

We can express the above relations in terms of corresponding bases. Namely, for $ \bar{a},\bar{a}'\in\mathcal{A}, $ we have that $ \bar{a}\sim\bar{a}' $ if and only if for some (equivalently for all) $ X_{\bar{a}}, X_{\bar{a}'}, Y_{\bar{a}} $ and $ Y_{\bar{a}'} $ that are respectively $ B $-bases and $ C $-bases for $ \bar{a} $ and $ \bar{a}', $ we have that
\[ \card[X_{\bar{a}}]\card[Y_{\bar{a}'}]=\card[X_{\bar{a}'}]\card[Y_{\bar{a}}]. \]

A similar description for $ \prec $ can be given naturally in terms of bases.  

\Cref{thmQInterpretable} states that, actually the structure $ \langle\mathcal{A}/\sim,\prec\rangle $ is interpretable in $ \mathfrak{M}. $ The proof of this theorem is based on a proper implementation of \Cref{lmaDefinability} to ``code'' the set theoretic notions such as Cartesian product, bijection and injection in definable families of finite sets. 

For any two disjoint non-empty sets $ A $ and $ B, $ let $ \mathbb{D}_{A\times B}\subseteq [A\cup B]^{2} $ be the following set
\[ \Big\{\{x,y\}\Big|x\in A, y\in B\Big\}. \]
Note that $ \mathbb{D}_{A\times B} $ has the same cardinality as $ A\times B. $ We call $  \mathbb{D}_{A\times B} $ the \textit{unordered Cartesian product} of $ A $ and $ B; $ we denote it by $ A\times_{\uo}B. $

In order to describe a bijection between arbitrary sets $ A $ and $ B, $ using some basic set theoretic techniques, one can describe a set $ D $ of unordered pairs, namely $ D\subseteq [A\Delta B]^{2}, $ whose existence gives rise to a bijection between $ A\backslash B $ and $ B\backslash A $ (and consequently from $ A $ to $ B $). Likewise, one can describe a set $ D'\subseteq [A\Delta B]^{2} $ leading to existence of an injection from $ A $ to $ B. $ 

Now, to prove that $ \theory(\mathfrak{M}) $ is not pseudofinite, it suffices to interpret $ \langle\mathcal{A}/\sim,\prec\rangle $ in $ \mathfrak{M}. $ 

\begin{thm}\label{thmQInterpretable}
Suppose that $ \mathcal{L}=\{R\} $ where $ R $ is a ternary relation. Also, suppose that $ \mathfrak{M} $ is the generic structure of the class $ \langle\kpluszero,\leq^{*}\rangle $. Then, $ \langle\mathbb{Q}^{+},<\rangle $ is interpretable in $ \mathfrak{M}. $ Therefore, $ \theory(\mathfrak{M}) $ is not pseudofinite.
\end{thm}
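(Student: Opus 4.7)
The plan is to show that the structure $\langle\mathcal{A}/{\sim},\prec\rangle$ built in the setup is genuinely definable in $\mathfrak{M}$, so that $\langle\mathbb{Q}^{+},<\rangle$ is interpretable there, and then to deduce non-pseudofiniteness from the fact that a non-trivial dense linear order without endpoints has no finite models: the sentence $\forall x\exists y\,(x\prec y)$ combined with non-emptiness fails in every finite linear order, hence interpretation transfers this obstruction to $\theory(\mathfrak{M})$. Definability of $\mathcal{A}$ itself is routine: conditions (i) and (ii) are quantifier-free and existential, respectively, while (iii) and (v) are universal quantifications over tuples of bounded length expressing disjointness of copies of $B$ and $C$ over $\bar{a}$.

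The decisive step is to express $\sim$ and $\prec$ first-order using \Cref{lmaDefinability}. Given $\bar{a},\bar{a}'\in\mathcal{A}$, I would first invoke \Cref{factFullAmalgam} together with genericity to arrange $B$- and $C$-bases $X_{\bar{a}},X_{\bar{a}'},Y_{\bar{a}},Y_{\bar{a}'}$ with pairwise disjoint supports (relocating $\bar{a}'$ within its $\sim$-class via free amalgamation if needed). By \Cref{lmaDefinability} with $k=1$, each such finite subset of $\mathfrak{M}$ is the solution set of a formula $R^{1}_{v}(x)$ for some parameter $v\in\mathfrak{M}$. The unordered Cartesian products $X_{\bar{a}}\times_{\uo}Y_{\bar{a}'}$ and $X_{\bar{a}'}\times_{\uo}Y_{\bar{a}}$ are then finite subsets of $[\mathfrak{M}]^{2}$, and by \Cref{lmaDefinability} with $k=2$ they too are coded by parameters. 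Finally, a bijection between two disjoint finite sets $P,Q$ of unordered pairs is itself coded, via \Cref{lmaDefinability} with $k=4$, by a set of unordered $4$-element subsets of $\mathfrak{M}$, each one packaging a matched element of $P\times Q$ (uniquely recoverable thanks to the disjointness hypothesis); the conditions making this a bijection are first-order in the codes involved. Consequently $\bar{a}\sim\bar{a}'$ is expressed by an existential statement asserting the existence of codes for all bases, their unordered products, and a witnessing bijection code, while $\bar{a}\prec\bar{a}'$ uses a code for an injection that misses at least one element in place of a bijection.

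Once this definability is in place, the remaining ingredients are already observed in the setup: the induced map $[\bar{a}]_{\sim}\mapsto\chi(B/\bar{a})/\chi(C/\bar{a})$ is well-defined and order-preserving by construction, and its surjectivity onto $\mathbb{Q}^{+}$ follows from the free-amalgamation observation that, for each $p/q\in\mathbb{Q}^{+}$, the structure consisting of $A$ together with $p$ copies of $B$ and $q$ copies of $C$ freely joined over $A$ lies in $\kpluszero$ and embeds closedly into $\mathfrak{M}$. The main obstacle I anticipate is the careful coordination, via a single genericity argument, of all the disjointness requirements on the bases, their products, and the matching $4$-sets, so that every existential claim made in the interpretation formulas for $\sim$ and $\prec$ can be witnessed simultaneously in $\mathfrak{M}$; once this is arranged, the translation from the set-theoretic counting condition into a first-order sentence is essentially bookkeeping.
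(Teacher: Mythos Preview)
Your overall strategy coincides with the paper's: code bases by $R^{1}$, their unordered Cartesian products by $R^{2}$, and a bijection (respectively, a proper injection) between the two products by $R^{4}$, all via \Cref{lmaDefinability}; then read off $\sim$ and $\prec$ from the existence of the appropriate codes.

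There is, however, one genuine gap. When you write ``relocating $\bar{a}'$ within its $\sim$-class via free amalgamation if needed'', you are describing a semantic manoeuvre, but inside the formula defining $\sim$ this relocation must be expressed syntactically: the formula must existentially quantify an auxiliary tuple $\bar{z}$ and impose a first-order condition linking $\bar{z}$ to $\bar{y}$. Writing ``$\bar{z}\sim\bar{y}$'' would be circular, since $\sim$ is precisely what you are trying to define; and leaving $\bar{z}$ unconstrained makes the formula vacuously satisfiable. The paper resolves this by importing, from Theorem~3.5 of \cite{Brody&Laskowski-OnRationalLimits}, a formula $\gamma(\bar{y},\bar{z})$ expressing the \emph{stronger} non-circular condition $\chi(B/\bar{y})=\chi(B/\bar{z})$ and $\chi(C/\bar{y})=\chi(C/\bar{z})$, and then additionally requires that every copy of $B$ or $C$ over $\bar{z}$ be disjoint from every copy over $\bar{x}$. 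With this auxiliary $\bar{z}$ constrained by $\gamma$ in hand, the disjointness needed for your $R^{2}$- and $R^{4}$-coding is guaranteed, and the remainder of your argument goes through exactly as written.
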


\begin{proof}
To prove this theorem, we use \Cref{lmaDefinability} and the setup provided 
above.

It can be easily verified that the conditions (i)-(iv) in the definition of $ \mathcal{A} $ are $ \varnothing $-definable. The proof of Theorem 3.5 of \cite{Brody&Laskowski-OnRationalLimits} guarantees the existence of a formula $ \gamma(\bar{y},\bar{z}) $ expressing that $ \chi(B/\bar{y})=\chi(B/\bar{z}) $ and $ \chi(C/\bar{y})=\chi(C/\bar{z}). $ 

Let $ \delta_{\sep}(\bar{x},\bar{y},\bar{z}) $ be a formula expressing that $ \bar{x},\bar{y},\bar{z}\in\mathcal{A}, $ conjunction with $ \gamma(\bar{y},\bar{z}), $ conjunction with the condition that each copy of $ B $ or $ C $ over $ \bar{z} $ is disjoint from every copy of $ B $ and $ C $ over $ \bar{x}. $ Now let $ {E}(\bar{x},\bar{y}) $ be the following formula 
\begin{align*}
\exists w&v_{2}v_{1}u_{4}u_{3}u_{2}u_{1}\bar{z} \Big[ 
\delta_{\sep}(\bar{x},\bar{y},\bar{z})
\wedge \text{``}R^{1}_{u_{1}}(\mathfrak{M}) \text{ is a } B\text{-basis for } \bar{x}\text{''}\\
&\wedge  \text{``}R^{1}_{u_{2}}(\mathfrak{M}) \text{ is a } C\text{-basis for } \bar{x}\text{''}
\wedge \text{``}R^{1}_{u_{3}}(\mathfrak{M}) \text{ is a } B\text{-basis for } \bar{z}\text{''}\\
&\wedge \text{``}R^{1}_{u_{4}}(\mathfrak{M})\text{ is a } C\text{-basis for } \bar{z}\text{''}
\wedge  \text{``}R^{2}_{v_{1}}(\mathfrak{M})=R^{1}_{u_{1}}(\mathfrak{M})\times_{\uo}R^{1}_{u_{4}}(\mathfrak{M}) \text{''}\\
&\wedge  \text{``}R^{2}_{v_{2}}(\mathfrak{M})=R^{1}_{u_{2}}(\mathfrak{M})\times_{\uo}R^{1}_{u_{3}}(\mathfrak{M}) \text{''}\\
&\wedge  \text{``} R^{4}_{w}(\mathfrak{M})\text{ defines a bijection between } R^{2}_{v_{1}}(\mathfrak{M}) \text{ and } R^{2}_{v_{2}}(\mathfrak{M})\text{''}\Big].
\end{align*}

Also, let $ O(\bar{x},\bar{y}) $ be the following 
\begin{align*}
\exists w&v_{2}v_{1}u_{4}u_{3}u_{2}u_{1}\bar{z} \Big[ 
\delta_{\sep}(\bar{x},\bar{y},\bar{z})
\wedge \text{``}R^{1}_{u_{1}}(\mathfrak{M}) \text{ is a } B\text{-basis for } \bar{x}\text{''}\\
&\wedge  \text{``}R^{1}_{u_{2}}(\mathfrak{M}) \text{ is a } C\text{-basis for } \bar{x}\text{''}
\wedge \text{``}R^{1}_{u_{3}}(\mathfrak{M}) \text{ is a } B\text{-basis for } \bar{z}\text{''}\\
&\wedge \text{``}R^{1}_{u_{4}}(\mathfrak{M})\text{ is a } C\text{-basis for } \bar{z}\text{''}
\wedge  \text{``}R^{2}_{v_{1}}(\mathfrak{M})=R^{1}_{u_{1}}(\mathfrak{M})\times_{\uo}R^{1}_{u_{4}}(\mathfrak{M}) \text{''}\\
&\wedge  \text{``}R^{2}_{v_{2}}(\mathfrak{M})=R^{1}_{u_{2}}(\mathfrak{M})\times_{\uo}R^{1}_{u_{3}}(\mathfrak{M}) \text{''}\\
&\wedge  \text{``} R^{4}_{w}(\mathfrak{M})\text{ defines an injection but not a bijection from } R^{2}_{v_{1}}(\mathfrak{M}) \text{ to } R^{2}_{v_{2}}(\mathfrak{M})\text{''}\Big].
\end{align*}

We show that for each $ \bar{a},\bar{a}'\in\mathcal{A} $ we have $ \bar{a}\sim\bar{a}' $ if and only if $ \mathfrak{M}\models E(\bar{a},\bar{a}'). $ 

Suppose that $ \bar{a}\sim\bar{a}'. $ Using the genericity of $ \mathfrak{M}, $ there exists a tuple $ \bar{a}''\in\mathcal{A} $ satisfying $ \delta_{\sep}(\bar{a},\bar{a}',\bar{a}''). $ In particular, we have that $ \bar{a}\sim\bar{a}''. $ Consider the set $ S=\mathbf{B}_{\bar{a}}\cup\mathbf{C}_{\bar{a}}\cup\mathbf{B}_{\bar{a}''}\cup\mathbf{C}_{\bar{a}''}, $ and let $ X_{\bar{a}}, X_{\bar{a}''}, Y_{\bar{a}} $ and $ Y_{\bar{a}''} $ be some $ B $-bases and $ C $-bases respectively. By applying \Cref{lmaDefinability} on $ S, $ there exist $ u_{1},u_{2},u_{3},u_{4}\in \mathfrak{M} $ with
$ R^{1}_{u_{1}}(\mathfrak{M})=X_{\bar{a}}, R^{1}_{u_{2}}(\mathfrak{M})=Y_{\bar{a}},R^{1}_{u_{3}}(\mathfrak{M})=X_{\bar{a}''} $ and $ R^{1}_{u_{4}}(\mathfrak{M})=Y_{\bar{a}''}. $

Set $ v_{1},v_{2}\in \mathfrak{M} $ to code respectively $ R^{1}_{u_{1}}(\mathfrak{M})\times_{\uo}R^{1}_{u_{4}}(\mathfrak{M}) $ and $ R^{1}_{u_{2}}(\mathfrak{M})\times_{\uo}R^{1}_{u_{3}}(\mathfrak{M}) $ as subsets of $ [S]^{2}. $

Since $ \bar{a}\sim\bar{a}'', $ we have that 
\[ \card[R^{1}_{u_{1}}(\mathfrak{M})]\times\card[ R^{1}_{u_{4}}(\mathfrak{M})]=\card[R^{1}_{u_{2}}(\mathfrak{M})]\times \card[R^{1}_{u_{3}}(\mathfrak{M})]. \]

But, $ \card[R^{1}_{u_{1}}(\mathfrak{M})\times_{\uo}R^{1}_{u_{4}}(\mathfrak{M})]= \card[R^{1}_{u_{1}}(\mathfrak{M})]\times\card[ R^{1}_{u_{4}}(\mathfrak{M})] $ and $ \card[R^{1}_{u_{2}}(\mathfrak{M})\times_{\uo}R^{1}_{u_{3}}(\mathfrak{M})]= \card[R^{1}_{u_{2}}(\mathfrak{M})]\times\card[ R^{1}_{u_{3}}(\mathfrak{M})]. $ Hence, there is a bijection, as a subset of $ [S]^{4} $, between $ R^{1}_{u_{1}}(\mathfrak{M})\times_{\uo}R^{1}_{u_{4}}(\mathfrak{M}) $ and $ R^{1}_{u_{2}}(\mathfrak{M})\times_{\uo}R^{1}_{u_{3}}(\mathfrak{M}) $ which can be coded by an element $ w\in \mathfrak{M}. $

For the other direction, suppose $ \mathfrak{M}\models E(\bar{a},\bar{a}'). $ Since we are working in the generic, for the tuple $ \bar{a}\in \mathfrak{M} $ we have that $ \chi(B/\bar{a}) $ and $ \chi(C/\bar{a}) $ are finite; similarly for $ \bar{a}'. $ Hence, the variables $ u_{1},u_{2},u_{3},u_{4}, v_{1},v_{2} $ and $ w $ find their appropriate set theoretic meanings. Therefore, $ w $ codes an actual bijection that yields the equivalence of $ \bar{a} $ and $ \bar{a}'. $ 

A similar argument can be applied to show that $ [\bar{a}]_{\sim} \prec[\bar{a}']_{\sim} $ if and only if $ \mathfrak{M}\models O(\bar{a},\bar{a}'). $
\end{proof}

\subsection{Case of a Binary Relation}
We turn to the case that $ R $ is a binary relation, i.e. it defines a graph. In this case, the model theoretic properties of the generic structure drastically change. In fact, thanks to an old result from random graph theory, there exists a complete axiomatization for $ \theory(\mathfrak{M}) $ yielding decidability and pseudofiniteness of this structure.

The following lemmas show that the class $ \langle\kpluszero,\leq^{*}\rangle $ has a natural graph theoretic interpretation.

\begin{lma}\label{lmaGeneralProperties}
	Suppose that $ A $ is a finite $ \mathcal{L} $-structure.
	\begin{itemize}
		\item[(i)] $ A\in\kpluszero\Leftrightarrow $ the number of edges in $ A $ is strictly less than the number of vertices of $ A\Leftrightarrow A $ is an acyclic graph.
		\item[(ii)] If $ A\in\kpluszero $ has $ k $ many connected components, then $ \delta(A)=k. $
		\item[(iii)] $ A\leq^{*}B\in\kpluszero $ if and only if $ B\backslash A $ is not connected to $ A. $
	\end{itemize}
\end{lma}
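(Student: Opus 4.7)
My plan is to identify $\kpluszero$, in the binary case, with the class of finite forests, and then derive all three parts from standard graph theory. The key observation is that the defining condition of $\kpluszero$ says exactly that every induced subgraph has strictly more vertices than edges, which is a classical characterisation of forests.

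For part (i), I would first establish the core equivalence $A \in \kpluszero$ iff $A$ is acyclic. One direction is contrapositive: if $A$ contains a cycle on vertices $\{v_1, \ldots, v_n\}$, the induced subgraph there contains at least the $n$ cycle edges on $n$ vertices, so $\delta \leq 0$, contradicting membership in $\kpluszero$. Conversely, every induced subgraph of a forest is itself a forest, and any nonempty forest on $m$ vertices with $k \geq 1$ components has $m - k$ edges, giving $\delta = k > 0$. The middle condition $|R[A]| < |A|$ then drops out of both outer conditions (from $\kpluszero$ by specialising to the subgraph $B = A$; from acyclicity by the forest edge count), closing the chain of equivalences. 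Part (ii) is an immediate byproduct: decomposing the forest $A$ into trees $T_1, \ldots, T_k$ of sizes $n_i$, each tree contributes $n_i - 1$ edges, so $|R[A]| = |A| - k$ and $\delta(A) = k$.

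For part (iii), I would argue both directions directly. For the ``if'' direction, assume no edge joins $A$ to $B \backslash A$, and take any $A \subsetneq C \subseteq B$. Setting $D = C \backslash A$ (which is nonempty), the induced edge set of $C$ decomposes as the disjoint union $R[A] \sqcup R[D]$, so $\delta(C) = \delta(A) + \delta(D)$; since $D$ is a nonempty subgraph of $B \in \kpluszero$, part (i) gives $\delta(D) > 0$, hence $\delta(C) > \delta(A)$. For the ``only if'' direction, argue contrapositively: if an edge $\{a, b\}$ has $a \in A$ and $b \in B \backslash A$, then the one-vertex extension $C := A \cup \{b\}$ satisfies $|R[C]| \geq |R[A]| + 1$, whence $\delta(C) \leq \delta(A)$, violating $A \leq^{*} B$. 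The lemma is essentially a dictionary between the predimension formalism and elementary graph theory; I do not anticipate any real obstacle beyond being careful that $R[\cdot]$ is computed on the induced substructure and handling the convention $\varnothing \in \kpluszero$.
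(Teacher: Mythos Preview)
Your arguments for (ii) and (iii), and for the equivalence $A \in \kpluszero \Leftrightarrow A$ is acyclic in (i), are correct and are precisely the elementary graph-theoretic reasoning the paper gestures at (the paper's own proof is the single line ``Obvious using the elementary techniques of finite graph theory'').

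There is, however, a genuine gap in part (i) concerning the middle condition. You claim that showing both outer conditions imply $|R[A]| < |A|$ ``closes the chain of equivalences'', but it does not: you have established $(\kpluszero) \Leftrightarrow (\text{acyclic})$ together with $(\text{either outer condition}) \Rightarrow (|R[A]|<|A|)$, which leaves the reverse implication $(|R[A]|<|A|) \Rightarrow (\text{acyclic})$ unproved. In fact that implication is \emph{false} as literally stated: a triangle together with an isolated vertex has $4$ vertices and $3$ edges, yet contains a cycle and lies outside $\kpluszero$. So the middle clause, read as a condition on $A$ alone, is strictly weaker than the outer two. This is really a defect in the lemma's phrasing rather than in your method; the only equivalence the paper uses downstream is $\kpluszero \Leftrightarrow$ acyclic, which you have proved. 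If one reads the middle clause as ``every nonempty subgraph of $A$ has fewer edges than vertices'' (which is just a restatement of $\delta(B)>0$ for all $B\subseteq A$), then all three conditions do coincide and your argument goes through verbatim.
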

\begin{proof}
	Obvious using the elementary techniques of finite graph theory.
\end{proof}

\begin{lma}\label{lmaMinPair}
	Suppose that $ \minpair{B}\subsetfinite N\in\kpluszerobar, $ then $ B\backslash A $ is a singleton. Hence, we have the following.
		\begin{itemize}
			\item[(i)] If $ \delta(B/A)=0, $ then $ B $ consists of a single element connected to $ A $ with only one edge.
			\item[(ii)] If $ \delta(B/A)<0, $ then $ B $ is a singleton with at least two relations to $ A. $ Moreover, the number of distinct copies of $ B $ over $ A $ is 1.
		\end{itemize}
\end{lma}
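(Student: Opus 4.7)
The plan is to exploit the graph-theoretic characterization provided by \Cref{lmaGeneralProperties} together with the inequality $\delta(B/A) \leq 0$ guaranteed by \Cref{factMinimalPairsDelta}. Everything rests on the fact that, in the binary case, $\kpluszero$ is exactly the class of finite acyclic graphs.

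First I would show that $B \setminus A$ is a singleton. Assume for contradiction that $\card[B \setminus A] \geq 2$ and fix any $b \in B \setminus A$. The induced substructure $C = A \cup \{b\}$ then satisfies $A \subsetneq C \subsetneq B$, so by \Cref{factMinimalPairsDelta} we have $\delta(C/A) > 0$. Because $R$ is binary, $\delta(C/A) = 1 - k_b$, where $k_b$ is the number of edges from $b$ to $A$; hence $k_b = 0$. So no vertex of $B \setminus A$ is joined to $A$ by an edge. Consequently, for any $D$ with $A \subsetneq D \subseteq B$, the induced graph on $D$ decomposes as the disjoint union of $A$ and the subgraph that $B$ induces on $D \setminus A$. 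Using \Cref{lmaGeneralProperties}(ii), this gives $\delta(D) = \delta(A) + \delta(D \setminus A) > \delta(A)$, which yields $A \leq^* B$ and contradicts the assumption that $(A,B)$ is a minimal pair.

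Having reduced to $B = A \cup \{b\}$ for a single vertex $b$, the identity $\delta(B/A) = 1 - k$, where $k$ denotes the number of edges between $b$ and $A$, immediately separates cases (i) and (ii): $\delta(B/A) = 0$ forces $k = 1$, while $\delta(B/A) < 0$ forces $k \geq 2$.

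For the uniqueness claim in (ii), I would suppose $B_1 = A \cup \{b_1\}$ and $B_2 = A \cup \{b_2\}$ are two distinct copies of $B$ over $A$ in $N$. Since each is the image of an embedding of $B$ fixing $A$, the vertices $b_1$ and $b_2$ share the same neighborhood in $A$, which contains at least $k \geq 2$ common vertices. Picking any two of them $a_1, a_2$ produces the $4$-cycle $a_1 - b_1 - a_2 - b_2 - a_1$ inside the finite substructure $\{a_1, a_2, b_1, b_2\} \subseteq N$, contradicting the acyclicity guaranteed by \Cref{lmaGeneralProperties}(i). The most delicate point is the observation that binary-case minimal pairs are extremely rigid: once $B \setminus A$ is known to be a single vertex, acyclicity alone rules out any duplication as soon as $k \geq 2$.
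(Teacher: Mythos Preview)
Your argument is correct and follows essentially the same route as the paper's proof: assume $\card[B\setminus A]\geq 2$, observe that each single-vertex extension $A\cup\{b\}$ is a proper intermediate substructure and hence must satisfy $A\leq^{*}A\cup\{b\}$ (equivalently $\delta(A\cup\{b\}/A)>0$), conclude that no $b\in B\setminus A$ is adjacent to $A$, and derive a contradiction with $\delta(B/A)\leq 0$. The paper leaves items (i), (ii) and the uniqueness claim as ``clear''; your explicit $4$-cycle argument for uniqueness in (ii) is a clean way to spell this out and matches the intended use of acyclicity.
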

\begin{proof}
	
	If $ B\backslash A $ has more than one element, then for each $ b\in B\backslash A $ we must have $ A\leq^{*}Ab, $ i.e. there is no relation between $ b $ and $ A. $ Consequently, there can not be any relation between $ B\backslash A $ and $ A. $ This means that $ \delta(B/A)=\delta(B\backslash A)\geq1 $ contradicting the fact that $ \minpair[A]{B}. $ Now items (i) and (ii) are clear.
\end{proof}

The following lemma shows that the notion of closedness in $ \langle\kpluszero,\leq^{*}\rangle $ is first-order expressible. Note that, in general, this notion is type definable. 
\begin{lma}\label{lmaGenericityFOExpressible}
For each $ n\in\omega, $ there is a formula $ \gamma_{\cMe}^{n}(\bar{x}) $ with $ \card[\bar{x}]=n $ such that for every $ N\in\kpluszerobar $ and $ \bar{a}\in N $ we have the following
\begin{flalign*}
&N\models \gamma_{\cMe}^{n}(\bar{a}) \Leftrightarrow \bar{a}\leq^{*}N. 
\end{flalign*} 
\end{lma}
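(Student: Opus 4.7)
The plan is to reduce the a priori type-definable notion of closedness to a single first-order formula by exploiting \Cref{lmaMinPair}, which says that in the binary case every minimal pair $(A,B)$ satisfies $\card[B\setminus A]=1$. Consequently, every potential witness to the failure of $\bar{a}\leq^{*}N$ is a single element outside $\bar{a}$ that is $R$-adjacent to $\bar{a}$, so a single universal quantifier over one variable $y$ will suffice.

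First I would establish the following characterisation: $\bar{a}\leq^{*}N$ holds if and only if no element $y\in N\setminus\bar{a}$ is $R$-related to any element of $\bar{a}$. If such a $y$ exists, the substructure $B$ on $\bar{a}\cup\{y\}$ adds one vertex and at least one edge, so $\delta(B)\leq\delta(\bar{a})$ and hence $\bar{a}\not\leq^{*}B$. Conversely, if no such $y$ exists, then for every finite $B\subseteq N$ containing $\bar{a}$ the set $B\setminus\bar{a}$ is disconnected from $\bar{a}$, and \Cref{lmaGeneralProperties}(iii) yields $\bar{a}\leq^{*}B$; taking the intersection over all such $B$ gives $\bar{a}\leq^{*}N$.

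Based on this, I would take
\[
\gamma_{\cMe}^{n}(x_{1},\ldots,x_{n}) \;:=\; \forall y\left(\bigwedge_{i=1}^{n} y\neq x_{i}\; \rightarrow\; \bigwedge_{i=1}^{n}\neg R(x_{i},y)\right),
\]
using the symmetry of $R$. The antecedent $\bigwedge_{i}y\neq x_{i}$ ensures that internal edges of $\bar{a}$ are not forbidden (one does not want to exclude $R(x_{i},x_{j})$), while the consequent rules out exactly the edges between $\bar{a}$ and its complement in $N$. By the previous paragraph, $N\models\gamma_{\cMe}^{n}(\bar{a})$ if and only if $\bar{a}\leq^{*}N$, which is the desired equivalence.

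There is no real obstacle; the whole argument rests on \Cref{lmaMinPair}. The contrast with the ternary case is instructive: there a $0$-minimal extension of $\bar{a}$ may add arbitrarily many new vertices at once (compare \Cref{lmaMinPairExists}), so one would need to quantify over tuples of unbounded length, which is precisely why the remark preceding the lemma observes that closedness is only type-definable in general.
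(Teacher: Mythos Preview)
Your proposal is correct and follows essentially the same approach as the paper: both use \Cref{lmaMinPair} to observe that any failure of closedness is witnessed by a single new vertex adjacent to $\bar{a}$, so closedness is expressed by a one-variable universal formula. You have simply made the formula and the two directions explicit, whereas the paper leaves them implicit.
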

\begin{proof}
By \Cref{lmaMinPair}, every minimal pair over $ \bar{a} $ consists of a single point with at least one relation to $ \bar{a}. $ Hence, being closed in $ N $ is equivalent to non-existence of such a point. 
\end{proof}

\begin{dfn}\label{dfnTUniv}
For any $ A\in\kpluszero $ with $ \card[A]=n, $ let $ \theta_{A} $ be the following sentence
\[ \exists\bar{x}\Big(\diag_{A}(\bar{x})\wedge \gamma_{\cMe}^{n}(\bar{x})\Big). \]
Now, let $ \univ $ be the collection of the sentences asserting that the relation $ R $ defines an acyclic graph together with the set $ \{\theta_{A}\hspace*{3pt}|\hspace*{3pt}A\in\kpluszero\}.$
\end{dfn}
It is obvious that $ \mathfrak{M} $ is a model of $ \univ. $ In fact, we show that $ \univ $ gives a complete axiomatization for $ \theory(\mathfrak{M}). $ To this end, we recall the following fact from random graph theory (Theorem 3.3.2 in \cite{Spencer-StrangeLogic}).
\begin{fact}\label{factSpencer}
Let $ G_{1}, G_{2} $ both be acyclic graphs in which every finite tree occurs as a component an infinite number of times. Then $ G_{1} $ and $ G_{2} $ are elementarily equivalent.
\end{fact}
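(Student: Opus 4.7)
The plan is to prove $G_{1}\equiv G_{2}$ via an Ehrenfeucht-Fraïssé game argument, showing that for every $n\in\omega$ Duplicator has a winning strategy in the $n$-round EF game between $G_{1}$ and $G_{2}$. Since both graphs are acyclic, every connected component is a tree; in the standard setting of this theorem all components are finite trees, so the hypothesis that every finite tree occurs infinitely often pins down the multiset of isomorphism types of components (up to multiplicity, which in both cases is $\aleph_{0}$).

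Duplicator's strategy maintains the following invariant: after round $k$, with $a_{1},\ldots,a_{k}$ played in $G_{1}$ and $b_{1},\ldots,b_{k}$ in $G_{2}$, the map $a_{i}\mapsto b_{i}$ is the restriction of a graph isomorphism $\sigma_{k}$ between the union of components of $G_{1}$ touched so far and the union of the corresponding components of $G_{2}$. In round $k+1$, when Spoiler plays some $a_{k+1}\in G_{1}$ (the symmetric case being analogous), there are two cases. If $a_{k+1}$ lies in an already-touched component, Duplicator responds with $\sigma_{k}(a_{k+1})$ and keeps $\sigma_{k+1}:=\sigma_{k}$. If $a_{k+1}$ lies in a previously untouched component $T$, then $T$ is a finite tree, so by hypothesis $G_{2}$ contains infinitely many components isomorphic to $T$; in particular at least one such component $T'$ has not been touched by any previous $b_{i}$ (there are at most $k<n$ previously touched components). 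Duplicator fixes any isomorphism $\tau\colon T\to T'$, plays $b_{k+1}:=\tau(a_{k+1})$, and extends to $\sigma_{k+1}:=\sigma_{k}\cup\tau$.

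The invariant is visibly preserved, and $\sigma_{n}$ at the end of play witnesses Duplicator's win, giving $G_{1}\equiv G_{2}$. The hypothesis is used precisely to guarantee that a fresh matching component $T'$ always exists, which is the single place where one could imagine being stuck; otherwise the argument is just the standard back-and-forth for forests, with disjointness of components ensuring that the component-wise isomorphisms never interfere. I therefore do not anticipate any serious obstacle — the entire content of the lemma is that ``infinitely many copies of every finite tree'' is exactly the right amount of homogeneity to feed into this back-and-forth.
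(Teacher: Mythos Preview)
Your argument has a genuine gap: you assume without justification that every connected component is a \emph{finite} tree. The hypothesis of the fact only says that $G_{1},G_{2}$ are acyclic and that every finite tree appears as a component infinitely often; it does not forbid infinite components. For instance, $G_{1}$ could be the disjoint union of infinitely many copies of every finite tree, while $G_{2}$ could be $G_{1}$ together with a copy of the two-way infinite path, or of an infinitely-branching tree. (This is not an artificial worry: in the paper's application, models of $\univ$ can perfectly well carry infinite components, so the fact is really needed in this generality.) In that situation your strategy breaks at the very first move: if Spoiler plays a vertex in an infinite component $T$ of $G_{2}$, Duplicator cannot choose an isomorphic component $T'$ in $G_{1}$, and your invariant cannot be established.

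The fix is to abandon the idea of matching whole components and instead match bounded-radius neighbourhoods, in the spirit of Hanf or Gaifman locality. For the $n$-round game one maintains, after round $k$, an isomorphism between the balls of radius roughly $3^{\,n-k}$ around the played points; when Spoiler plays a new point far from the previous ones, the required neighbourhood is a finite rooted tree, and a fresh isomorphic neighbourhood exists on the other side precisely because every finite tree occurs infinitely often as a component. This is the content of Spencer's proof of the result the paper cites. Your write-up captures the easy special case where both forests happen to have only finite components, but the locality bookkeeping needed for the general statement is exactly the part you have skipped.
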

\begin{lma}\label{lmaTUnivisUltraHomog}
Every model $ N $ of $ \univ $ is *-homogeneous. 
\end{lma}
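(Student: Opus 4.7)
The plan is to exploit the graph-theoretic description of closedness. By \Cref{lmaGeneralProperties}(iii), $A \leq^* B$ in $\kpluszero$ amounts to saying that $B \setminus A$ has no edge to $A$, so $B \setminus A$ decomposes as a disjoint union of finitely many finite trees $T_1, \ldots, T_n$, none of which touches $A$. Similarly, by \Cref{lmaMinPair}(i), the condition $A \leq^* N$ is equivalent to the statement that no vertex of $N \setminus A$ is adjacent to $A$; equivalently, $A$ is a union of connected components of $N$.

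The main preparatory step is to show that every model $N \models \univ$ realises each finite tree $T$ as infinitely many distinct connected components. Given $k \geq 1$, let $T_k$ denote the disjoint union of $k$ isomorphic copies of $T$; this structure lies in $\kpluszero$, so the sentence $\theta_{T_k}$ provides a closed embedding of $T_k$ into $N$. Since this image is closed in $N$, no edges cross between it and its complement, and therefore each of the $k$ constituent copies of $T$ is itself a full connected component of $N$. Letting $k$ range over $\omega$ produces infinitely many pairwise disjoint connected components of $N$ isomorphic to $T$.

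With this in hand, $*$-homogeneity follows by bookkeeping. Given $A \leq^* B$ with $A \leq^* N$, decompose $B \setminus A$ into its connected components $T_1, \ldots, T_n$. Since $A$ is finite it meets only finitely many components of $N$, so the previous paragraph lets us successively select pairwise disjoint components $\tilde{T}_1, \ldots, \tilde{T}_n$ of $N$, all disjoint from $A$, with $\tilde{T}_i \cong T_i$. The map that is the identity on $A$ and a chosen isomorphism $T_i \to \tilde{T}_i$ on each piece then yields an isomorphism of $B$ onto $A \cup \tilde{T}_1 \cup \cdots \cup \tilde{T}_n$, since there are no cross-edges between the pieces on either side. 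The image is a union of connected components of $N$, and hence closed in $N$.

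I do not anticipate any genuine obstacle: once the translation from the predimension condition to the language of acyclic graphs and components is in place, all the content is supplied by the axioms $\theta_{T_k}$, which guarantee exactly the required supply of disjoint tree components.
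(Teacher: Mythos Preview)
Your argument is correct and uses essentially the same mechanism as the paper's: invoke the universality axioms $\theta_{(\cdot)}$ to produce many disjoint closed copies inside $N$, then avoid the finitely many components occupied by $A$. The only organizational difference is that you first decompose $B\setminus A$ into its constituent trees $T_1,\ldots,T_n$ and realise each $T_i$ separately (using $\theta_{T_k}$ for growing $k$), whereas the paper embeds $\lvert A\rvert$ disjoint copies of the whole structure $B$ at once and argues that one of them must miss $A$. Your route has the small bonus of explicitly proving that every finite tree occurs as a component of $N$ infinitely often, which is precisely the hypothesis of \Cref{factSpencer} used in \Cref{thmTUnivComplete}; the paper instead derives that fact \emph{from} *-homogeneity. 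One minor citation slip: the equivalence ``$A\leq^{*}N$ iff no vertex of $N\setminus A$ is adjacent to $A$'' follows from the main statement of \Cref{lmaMinPair} (that $B\setminus A$ is a singleton for any minimal pair), not specifically from item~(i).
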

\begin{proof}
Suppose that $ A\leq^{*}N $ and $ A\leq^{*}B\in\kpluszero. $ Let $ n=\card[A] $ and $ B' $ be the structure that is obtained from $ n $ copies of $ B $ being mutually freely amalgamated over the empty set. Using the universality of $ N, $ there is a closed embedding of $ B' $ into $ N. $ Hence, there is at least one copy of $ B $ in $ B' $ that is disconnected from $ A. $
\end{proof}

\begin{thm}\label{thmTUnivComplete}
Suppose that $ \mathcal{L}=\{R\} $ where $ R $ is a binary relation. Also, suppose that $ \mathfrak{M} $ is the generic structure of the class $ \langle\kpluszero,\leq^{*}\rangle $. Then
\begin{itemize}
\item[(i)] $ \univ $ is complete, hence $ \theory(\mathfrak{M}) $ is decidable.
\item[(ii)] $ \mathfrak{M} $ is pseudofinite.
\end{itemize}
\end{thm}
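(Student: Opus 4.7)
The plan is to deduce both clauses from Fact \ref{factSpencer} together with the explicit graph-theoretic description of $\kpluszero$ afforded by Lemma \ref{lmaGeneralProperties}.

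For (i), I would first verify that any model of $\univ$ contains every finite tree as a connected component infinitely often. Let $T$ be any finite tree and $k\geq 1$, and let $A_{k}$ denote the disjoint union of $k$ pairwise disjoint copies of $T$. Since $A_{k}$ is a forest, Lemma \ref{lmaGeneralProperties}(i) gives $A_{k}\in\kpluszero$, so $\theta_{A_{k}}\in\univ$. In any $N\models\univ$, the sentence $\theta_{A_{k}}$ provides a copy of $A_{k}$ that is closed in $N$; by Lemma \ref{lmaGeneralProperties}(iii) this copy has no edges to its complement, and since each of its $k$ constituent copies of $T$ is already connected, each must be a full connected component of $N$. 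Varying $k$, every finite tree occurs as a component of $N$ infinitely often. Fact \ref{factSpencer} then tells us that any two models of $\univ$ are elementarily equivalent, so $\univ$ is complete. Since $\kpluszero$ is a decidable class of finite structures (membership reduces to an edge count) and $A\mapsto \theta_{A}$ is computable, $\univ$ is a recursive set of axioms; completeness plus recursive axiomatisability yields decidability of $\theory(\mathfrak{M})$.

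For (ii), I would exhibit a direct ultraproduct witness. For each $n\in\omega$, choose a finite $\mathcal{L}$-structure $F_{n}$ defined as the disjoint union, over all isomorphism classes of trees of size at most $n$, of $n$ pairwise disjoint copies of each such tree. Each $F_{n}$ is a finite forest, so $F_{n}\in\kpluszero$ by Lemma \ref{lmaGeneralProperties}(i). Fix a non-principal ultrafilter $\mathcal{U}$ on $\omega$ and form $F:=\prod_{\mathcal{U}}F_{n}$. By {\L}os's theorem $F$ is an acyclic graph, and moreover for each finite tree $T$ and each $k\geq 1$ the set of $n$ for which $F_{n}$ contains at least $k$ disjoint copies of $T$ as connected components is cofinite, hence belongs to $\mathcal{U}$; so every finite tree occurs as a component of $F$ infinitely often. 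Thus $F\models\univ$, and by part (i) $F\equiv\mathfrak{M}$. Every $\varphi\in\theory(\mathfrak{M})$ therefore holds in $F$, hence in some $F_{n}$ by {\L}os, proving pseudofiniteness.

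The only delicate point is the component-splitting observation in (i): one has to notice that a closed embedding of the forest $A_{k}$ into a model of $\univ$ literally realises each individual copy of $T$ as a free-standing connected component, which is exactly what Lemma \ref{lmaGeneralProperties}(iii) provides. After that observation both clauses reduce to a clean invocation of Fact \ref{factSpencer} together with the standard ultraproduct-of-finite-structures construction, and no new genericity argument is needed.
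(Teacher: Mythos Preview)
Your proof is correct and follows essentially the same strategy as the paper: reduce (i) to Fact~\ref{factSpencer} by showing every model of $\univ$ has each finite tree as a component infinitely often, and establish (ii) via an ultraproduct of finite forests. The only difference is that you argue directly from the axioms $\theta_{A_{k}}$ for the $k$-fold disjoint union of a tree, whereas the paper routes through Lemma~\ref{lmaTUnivisUltraHomog} ($*$-homogeneity of models of $\univ$); your shortcut is slightly cleaner, and your explicit choice of $F_{n}$ containing $n$ copies of each small tree makes the ``infinitely many components'' verification in (ii) more transparent than the paper's one-line sketch.
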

\begin{proof}
(i) Using \Cref{lmaTUnivisUltraHomog}, it can be easily seen that $ N\models \univ $ if and only if every finite acyclic tree occurs as a component infinitely many times in $ N. $ Hence, using \Cref{factSpencer}, every two models of $ \univ $ are elementarily equivalent.

(ii) Let $ \{A_{i}\}_{i\in\omega} $ be an enumeration of all finite trees. For each $ i\in\omega, $ let $ B_{i} $ be the free amalgamation of $ A_{0},\ldots,A_{i} $ over the empty set. Now, given a non-principal ultrafilter $ \mathcal{U}, $ it can be seen that the $ \prod_{\mathcal{U}} B_{i} $ is a model of $ \univ. $
\end{proof}

\begin{rmrk}
Using the results on \textit{ultraflat} graphs (\cite{Herre-SuperstableGraphs}), one can see that the $ \theory(\mathfrak{M}) $ (in the binary case) is superstable. Furthermore, it can be easily seen that $ \theory(\mathfrak{M}) $ is not small, hence giving another example of a strictly superstable Hrushovski construction. The question of finding such a structure was first asked by Baldwin (Question 12 in \cite{Baldwin-Pathological}) and was answered by Ikeda and Kikyo in \cite{Ikeda&Kikyo-OnSuperStableGenerics}. In a separate paper, we have addressed  the stability theoretic issues for similar Hrushovski generic acyclic graphs (\cite{Vali&Pourmahd-SSS}).
\end{rmrk}
\section{Concluding Remarks}
For a rational $ \alpha\in(0,1] $ and a relation $ R $ with $ n_{R}\geq 2, $ one can define the predimension function $ \delta_{\alpha} $ as 
\[ \delta_{\alpha}(A):=\card[A]-\alpha|R[A]|, \]
for every finite $ \mathcal{L} $-structure $ A. $

\Cref{thmQInterpretable} states that, by taking $ n_{R}=3 $ and $ \alpha=1, $ the theory of the $ \langle\mathcal{K}^{+}_{1},\leq^{*}_{1}\rangle $-generic is not pseudofinite.
\Cref{lmaMinPairExists} and \Cref{lmaDefinability} are key steps in our argument in the proof of \Cref{thmQInterpretable}. These lemmas were originally proved for $ \alpha\in(0,1)\cap\mathbb{Q} $ and a binary relation $ R, $ but as it is also mentioned in \cite{Brody&Laskowski-OnRationalLimits}, they can be obtained for a relation $ R $ with $ n_{R}\geq 3. $ Hence, the same machinery proves \Cref{thmQInterpretable} for $ \alpha\in(0,1)\cap\mathbb{Q}. $ 

To sum up this paper with the available results in the literature (\cite{Brody&Laskowski-OnRationalLimits} and \cite{Evans&Wong-SomeRemarksonGen}), we establish the following theorem.

\begin{thm}\label{thmFinalTheorem}
Suppose that $ \mathcal{L} $ is a language consisting of a single relation $ R $ with arity  $ n_{R}.$ Also suppose that for $ \alpha\in(0,1], $ the $ \langle\mathcal{K}^{+}_{\alpha},\leq^{*}_{\alpha}\rangle $-generic structure is denoted by $ \mathfrak{M}_{\alpha}. $
\begin{itemize}
\item[(i)] If $ n_{R}\geq 3, $ then for any $ \alpha\in(0,1]\cap\mathbb{Q}, $ the theory of $ \mathfrak{M}_{\alpha} $ is not pseudofinite. 
\item[(ii)] If $ n_{R}=2, $ then for any $ \alpha\in(0,1)\cap\mathbb{Q}, $ the theory of $ \mathfrak{M}_{\alpha} $ is not pseudofinite.
\item[(iii)] If $ n_{R}=2, $ then for $ \alpha=1, $ the theory of $ \mathfrak{M}_{\alpha} $ is pseudofinite (\Cref{thmTUnivComplete}).
\end{itemize}
Moreover, on contrary to the cases (i) and (ii), the theory of $ \mathfrak{M}_{\alpha} $ in case (iii) is decidable.
\end{thm}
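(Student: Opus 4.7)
The theorem consolidates three essentially independent results. The plan is to reduce each clause to an existing theorem in the paper or in the cited literature, with the small technical extensions of the two key lemmas flagged in the concluding remarks.

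Clause (iii) is \Cref{thmTUnivComplete}: the complete axiomatization $\univ$ delivers both decidability (since $\univ$ is recursively enumerable and complete) and pseudofiniteness (via the ultraproduct of the finite acyclic graphs $B_{i}$ that realize every finite tree as a component infinitely often). Clause (i) for $\alpha=1$ and $n_{R}=3$ is \Cref{thmQInterpretable}; the interpretation of $\langle\mathbb{Q}^{+},<\rangle$ witnesses non-pseudofiniteness because dense linear orders without endpoints are not pseudofinite. Clause (ii), i.e.\ $n_{R}=2$ with $\alpha\in(0,1)\cap\mathbb{Q}$, is the original setting of \cite{Brody&Laskowski-OnRationalLimits}, where an analogous interpretation yields non-pseudofiniteness.

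The remaining cases of (i), namely $\alpha\in(0,1)\cap\mathbb{Q}$ with $n_{R}\geq 3$ (and $\alpha=1$ with $n_{R}\geq 4$), are handled by replaying the proof of \Cref{thmQInterpretable} with $\delta$ replaced by $\delta_{\alpha}$ throughout. That argument depends only on two nontrivial facts about $\mathfrak{M}_{\alpha}$: (a) existence of bounded-size $0$-biminimal extensions over arbitrary $A$ (\Cref{lmaMinPairExists}), and (b) definable coding of arbitrary $X\subseteq[S]^{k}$ (\Cref{lmaDefinability}). For (a), writing $\alpha=p/q$, one must exhibit $C\supseteq A$ satisfying $q(|C|-|A|)=p(|R[C]|-|R[A]|)$, with $\delta_{\alpha}$ strictly increasing along every proper chain $A\subsetneq A'\subsetneq C$ and with every $a\in A$ participating in a new hyperedge; with $n_{R}\geq 3$ there is enough combinatorial slack to construct such a $C$ explicitly on top of any $A$ of prescribed cardinality. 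Given (a), the proof of (b) goes through as in Proposition 3.3 of \cite{Brody&Laskowski-OnRationalLimits}, and the interpretation $\langle\mathcal{A}/\sim,\prec\rangle\cong\langle\mathbb{Q}^{+},<\rangle$ in $\mathfrak{M}_{\alpha}$ proceeds without change.

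The final decidability contrast is then automatic: recursive enumerability and completeness of $\univ$ give decidability in (iii), whereas in (i) and (ii) an interpretation of Robinson arithmetic (as in \cite{Brody&Laskowski-OnRationalLimits,Evans&Wong-SomeRemarksonGen} and alluded to in the Introduction) rules out decidability. The one real obstacle is the rational adaptation of \Cref{lmaMinPairExists}: the simultaneous demands of biminimality, $0$-minimality under the arithmetic constraint $q|C\setminus A|=p\cdot(\text{new hyperedges})$, and a uniform bound on $|C\setminus A|$ are delicate — but this is precisely the tension resolved in \cite{Brody&Laskowski-OnRationalLimits}, so reusing their hypergraph constructions closes the argument.
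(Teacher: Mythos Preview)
Your proposal is correct and mirrors the paper's own treatment: the theorem is presented there as a summary, with the preceding discussion in the Concluding Remarks playing the role of proof --- clause (iii) is \Cref{thmTUnivComplete}, clause (i) at $\alpha=1$ and $n_{R}=3$ is \Cref{thmQInterpretable}, and the remaining cases of (i) and (ii) are obtained by observing that \Cref{lmaMinPairExists} and \Cref{lmaDefinability} (hence the interpretation of $\langle\mathbb{Q}^{+},<\rangle$) go through for rational $\alpha<1$ and for $n_{R}\geq 3$, as already noted in \cite{Brody&Laskowski-OnRationalLimits}. If anything, your breakdown is slightly more explicit than the paper's in flagging the $n_{R}\geq 4$, $\alpha=1$ subcase and in spelling out why the $\langle\mathbb{Q}^{+},<\rangle$ interpretation forces non-pseudofiniteness.
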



%
%

\paragraph*{\textbf{Acknowledgement.}} Some parts of this work were developed during our visit to Intitute Henri Poincar\'e (IHP). The authors would like to thank IHP and CIMPA for supporting our attendance in the trimester MOCOVA 2018 holding at IHP. We would also like to thank J. Baldwin, C. Laskowski, D. Macpherson, K. Tent, and the anonymous referee for their helpful comments and discussions.

\bibliographystyle{alpha}
\bibliography{../../01_AppFiles/refArticles,../../01_AppFiles/refBooks}

\end{document}